\def\sqr#1#2{{\vcenter{\vbox{\hrule height.#2pt
              \hbox{\vrule width.#2pt height#1pt \kern#1pt \vrule width.#2pt}
              \hrule height.#2pt}}}}
\def\dbR{{\mathop{\rm l\negthinspace R}}}
\def\3n{\negthinspace \negthinspace \negthinspace }
\def\2n{\negthinspace \negthinspace }
\def\1n{\negthinspace }
\def\ds{\displaystyle}
\def\dbR{{\mathop{\rm l\negthinspace R}}}
\def\={\buildrel \triangle \over =}
\def\l{\lambda}
\def\ns{\noalign{\ss} }
\def\diag{\hbox{\rm $\,$diag$\,$}}
\def\O{\Omega}
\def\ms{\medskip}
\def\q{\quad}
\def\max{\mathop{\rm max}}
\def\min{\mathop{\rm min}}
\def\pa{\partial}
\def\cds{\cdots}
\def\|{\Big |}
\def\({\Big (}
\def\){\Big )}
\def\[{\Big[}
\def\]{\Big]}
\def\be{\begin{equation}}
\def\bel{\begin{equation}\label}
\def\ee{\end{equation}}
\def\bt{\begin{theorem}}
\def\bcd{\begin{condition}}
\def\ecd{\end{condition}}
\def\et{\end{theorem}}
\def\bc{\begin{corollary}}
\def\ec{\end{corollary}}
\def\bde{\begin{definition}}
\def\ede{\end{definition}}
\def\bl{\begin{lemma}}
\def\el{\end{lemma}}
\def\bp{\begin{proposition}}
\def\ep{\end{proposition}}
\def\br{\begin{remark}}
\def\er{\end{remark}}
\def\ba{\begin{array}}
\def\ea{\end{array}}
\def\ed{\end{document}}
\def\ns{\noalign{\ms}}
\def\ds{\displaystyle}
\def\square#1{\vbox{\hrule\hbox{\vrule height#1%
     \kern#1\vrule}\hrule}}
\def\rectangle#1#2{\vbox{\hrule\hbox{\vrule height#1%
     \kern#2\vrule}\hrule}}
\font\tenbb=msbm10 \font\sevenbb=msbm7
\font\fivebb=msbm5
\newtheorem{lemma}{Lemma}[section]
\newtheorem{remark}{Remark}[section]
\newtheorem{example}{Example}[section]
\newtheorem{theorem}{Theorem}[section]
\newtheorem{corollary}{Corollary}[section]
\newtheorem{definition}{Definition}[section]
\newtheorem{proposition}{Proposition}[section]
\newtheorem{condition}{Condition}[section]
\begin{document}
\title{\textbf{Some Sufficient Conditions for the Controllability of Wave Equations with Variable Coefficients}}
\date{}
\author{ Yuning Liu\thanks{Faculty of Mathematics,
University of Regensburg, D-93053,
Regensburg, Germany. (liuyuning.math@gmail.com). The author was partially supported by the University of Regensburg.}\quad\quad
 }

\maketitle
\abstract{In this work, we present some easily verifiable sufficient conditions that guarantee the controllability of wave equations with non-constant coefficients. These conditions work as complements for those obtained in \cite{Yao1}. }

\section{Introduction and the Main Results}

Let $T>0$ and $\O\subset \mathbb{R}^n$ be a bounded
domain with a $C^2$ boundary $\pa\O$. Let
$a^{ij}\in C^1(\overline \O)$($i,j=1,\cds,n$)
such that $a^{ij}=a^{ji}$ and
$A\=(a^{ij})_{1\leq i,j\leq n}$ is a uniformly
positive definite matrix. Consider the following
hyperbolic equation:
\begin{equation}\label{system1}
\left\{
\begin{array}{ll}\ds
y_{tt} - \sum_{i,j=1}^n
\big(a^{ij}y_{x_i}\big)_{x_j}=0 &\mbox{ in }
(0,T)\times\O,\\
\ns\ds y=0 &\mbox{ on } (0,T)\times \pa\O,\\
\ns\ds y(0)=y_0,\;y_t(0)=y_1 &\mbox{ on }\O.
\end{array}
\right.
\end{equation}
Here $(y_0,y_1)\in H_0^1(\O)\times L^2(\O)$. In
order to establish the boundary observability
estimate for the equation \eqref{system1} by
multiplier method or Carleman estimate, one needs the following conditions (see
\cite{FYZ} for example):

\begin{condition}\label{con}
There exists a function $d\in C^2(\overline\O)$
such that
\begin{equation}\label{con1}
\sum_{i,j=1}^n\bigg\{\sum_{i',j'=1}^n\[
2a^{ij'}(a^{i'j}d_{x_i'})_{x_{j'}} -
a^{ij}_{x_{j'}}a^{i'j'}d_{x_{i'}}
\]\bigg\}\xi^i\xi^j \geq \mu_0\sum_{i,j=1}^n
a^{ij}\xi_i\xi_j,
\end{equation}
when $(x,\xi_1,\cds,\xi_n)\in\overline \O\times
\mathbb{R}^n$, and such that
and
\begin{equation}\label{con2}
|\nabla d|>0  \q\mbox{ in } \overline{\O}.
\end{equation}
\end{condition}

\begin{remark}\label{rm1}
One can directly verify the following: The condition \eqref{con1}
is equivalent to that the  matrix
\begin{equation}\label{B}
B=(b^{ij})_{1\leq i,j\leq n}\=\bigg(
\sum_{i',j'=1}^n\(
a^{ij'}a^{i'j}d_{x_{i'}x_{j'}} +
\frac{a^{ij'}a_{x_{j'}}^{i'j} +
a^{jj'}a^{i'i}_{x_{j'}}-a^{ij}_{x_{j'}}a^{i'j'}}{2}d_{x_{i'}}
\)\bigg)_{1\leq i,j\leq n}
\end{equation}
is uniformly positive definite.
\end{remark}

The function $d$ verifying (\ref{con1}) and (\ref{con2}) does not exist for some cases. This can be seen from the following example:
\begin{example}\label{ex1}
Let $\O=\{(x,y):\,x^2+y^2\leq 2\}$. Let
$(a^{ij})_{1\leq i,j\leq 2}=\diag(a^{1},a^{2})$
with $a^{1}(x,y)=a^{2}(x,y) = 1+x^2+y^2$. By an indirect proof based on the Geometric
Control Condition given in \cite{BLR}, we can
show that there is no such a function  $d$  that satisfies
\eqref{con1}.
\end{example}

Now, we study the existence of functions $d$ verifying (\ref{con1}) and (\ref{con2}) for
suitable $(a^{ij})_{1\leq i,j\leq n}$.
 We  will focus our studies on the special case where
$A=(a^{ij})_{1\leq i,j\leq
n}=\diag(a^1,\cds,a^n)$, where $a^i\in
C^1(\overline\O)$. From Example \ref{ex1}, we
see that even in this case, the above-mentioned functions $d$ may not exist. Thus, it is interesting to provide certain
easily verifiable condition to ensure the existence of such functions $d$ in the case when $A$ is diagonal.
The main results of this study are as follows:
\begin{theorem}\label{maintheorem}
Let  $A=\diag(a^1,\cds,a^n)$, with $a^i\in
C^1(\overline\O)~(1\leq i\leq n)$, be positive uniformly definite over $\overline{\Omega}$. If there exists $j\in\{1,\cdots,n\}$ such that all the terms of  $\{a^i_{x_j}\}_{1\leq i\leq n;i\neq j}$ remain positive (or negative ) over $\overline{\Omega}$, then there is  a function $d\in C^2(\overline\O)$ verifying Condition \ref{con}.
\end{theorem}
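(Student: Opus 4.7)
The plan is to exploit the diagonal form of $A$ by seeking $d$ as a function of the single variable $x_j$ singled out by the hypothesis. After relabelling coordinates I may assume $j=n$; by symmetry it suffices to treat the case where $a^i_{x_n}>0$ on $\overline{\Omega}$ for every $i\neq n$, the opposite-sign case being handled by replacing $e^{-\lambda x_n}$ below with $e^{\lambda x_n}$. Compactness of $\overline{\Omega}$ then furnishes a uniform lower bound $a^i_{x_n}\geq c_0>0$.

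The natural ansatz is $d(x)=e^{-\lambda x_n}$ with a parameter $\lambda>0$ to be chosen large. Every partial of $d$ vanishes except $d_{x_n}=-\lambda e^{-\lambda x_n}$ and $d_{x_n x_n}=\lambda^2 e^{-\lambda x_n}$. Substituting into \eqref{B} and using $a^{kl}=a^k\delta_{kl}$, the diagonality of $A$ combined with the single-variable structure of $d$ forces $b^{ij}=0$ whenever $i\neq j$ and neither index equals $n$. A direct case split yields, for $i\neq n$,
\begin{equation*}
b^{ii}=\tfrac{\lambda}{2}\,a^n a^i_{x_n}e^{-\lambda x_n},\qquad b^{in}=b^{ni}=-\tfrac{\lambda}{2}\,a^i a^n_{x_i}e^{-\lambda x_n},
\end{equation*}
while $b^{nn}=\lambda^2(a^n)^2 e^{-\lambda x_n}-\tfrac{\lambda}{2}a^n a^n_{x_n}e^{-\lambda x_n}$. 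Thus the diagonal entries $b^{ii}$ for $i\neq n$ and the coupling entries $b^{in}$ are all of order $\lambda$, whereas $b^{nn}$ is of order $\lambda^2$.

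Positive definiteness of $B$ would then be established via the Schur complement with respect to the $(n-1)\times(n-1)$ diagonal block $D=\diag(b^{11},\ldots,b^{n-1,n-1})$, which is uniformly positive by the hypothesis. Writing $v=(b^{1n},\ldots,b^{n-1,n})^\top$, a calculation gives $v^\top D^{-1}v=\sum_{i<n}(b^{in})^2/b^{ii}$, a sum of order $\lambda$ (each summand being a constant multiple of $\lambda(a^i)^2(a^n_{x_i})^2/(a^n a^i_{x_n})\cdot e^{-\lambda x_n}$), while $b^{nn}$ is of order $\lambda^2$. Choosing $\lambda$ sufficiently large and using uniform continuity on the compact set $\overline{\Omega}$, the Schur complement $b^{nn}-v^\top D^{-1}v$ is then uniformly positive; together with positivity of $D$ this yields $B$ uniformly positive definite on $\overline{\Omega}$, which via Remark \ref{rm1} is exactly \eqref{con1}. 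Finally $|\nabla d|=\lambda e^{-\lambda x_n}>0$ gives \eqref{con2}.

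The main technical obstacle is the coupling: the off-diagonal entries $b^{in}$ involve $a^n_{x_i}$, quantities whose signs the hypothesis does not constrain. The rescue is the $\lambda$-order mismatch $b^{nn}\sim\lambda^2$ versus $b^{in}\sim\lambda$ produced precisely by the exponential ansatz; a linear or polynomial choice of $d$ would yield matching orders in the diagonal and coupling blocks and would not permit absorption of the cross terms.
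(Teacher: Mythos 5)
Your proof is correct, and it takes a genuinely different (and in fact leaner) route than the paper. The paper chooses $d(x)=e^{\lambda(c+x_j)}+\sum_{i\neq j}e^{\lambda x_i}$, rescales the rows of $B$ by $d_{x_j}$ (resp.\ $d_{x_jx_j}$ for the $j$-th row), and argues that as $\lambda\to+\infty$ the rescaled matrix converges uniformly to a matrix that is nonzero only on the diagonal and in the $j$-th column, whose leading principal minors are then products of the diagonal entries $-\tfrac12 a^ja^i_{x_j}$ and $(a^j)^2$; positivity of the minors for large $\lambda$ follows. You instead take $d$ to depend on $x_j$ alone, which collapses $B$ to an arrowhead matrix (a diagonal block $D=\diag(b^{11},\dots,b^{n-1,n-1})$ of order $\lambda$, a coupling column of order $\lambda$, and a corner entry $b^{nn}$ of order $\lambda^2$), and conclude by a Schur complement. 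Your entry computations agree with the paper's specialization \eqref{B1}: for $i\neq n$ one has $b^{ii}=-\tfrac12 a^na^i_{x_n}d_{x_n}$, $b^{in}=\tfrac12 a^ia^n_{x_i}d_{x_n}$, and $b^{nn}=(a^n)^2d_{x_nx_n}-\tfrac12 a^na^n_{x_n}d_{x_n}$, so the sign hypothesis on $\{a^i_{x_n}\}_{i\neq n}$ controls exactly the block $D$, while the unconstrained derivatives $a^n_{x_i}$ and $a^n_{x_n}$ enter only through terms that are $O(\lambda)$ and are absorbed by the $\lambda^2$ growth of $b^{nn}$ --- the same mechanism that drives the paper's limit argument, but made quantitative. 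What your version buys is an explicit, checkable threshold for $\lambda$ and a visibly symmetric, diagonal-plus-arrow structure; what the paper's multi-exponential ansatz buys is essentially nothing extra for this theorem (the additional terms $e^{\lambda x_i}$, $i\neq j$, only make $d_{x_i}>0$ for all $i$, which the argument never uses), so your simplification is legitimate. Two small points to make fully rigorous: state explicitly that uniform positivity of $D$ and of the Schur complement, together with boundedness of the entries, yields $B\geq\epsilon I$ for some $\epsilon>0$ independent of $x$ (Remark \ref{rm1} requires uniform, not merely pointwise, positive definiteness, and $e^{-\lambda x_n}$ is bounded below on $\overline\Omega$ so this is immediate); and note that $B$ is indeed symmetric here ($b^{in}=b^{ni}$), which your Schur-complement step tacitly uses.
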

It is worth mentioning that, in the statement of the main theorem, we don't need the structural condition on $a^j_{x_j}$ where $j$ is the fixed index.
Before carrying out the proof, we give two corollaries. The first one corresponds to the case $j=1$:
\begin{corollary}\label{th1}
Let  $A=\diag(a^1,\cds,a^n)$, with $a^i\in
C^1(\overline\O)$, $i=1,\cdots, n$, be positive uniformly definite over $\overline{\Omega}$.
Suppose that
\begin{equation}\label{wang1.5}
a^k_{x_1}>0~(\text{or}~ a^k_{x_1}<0)~\mbox{over}\;\;\overline{\Omega},\;\;\mbox{for}\;\; 2\leq k\leq n,
\end{equation}
 then, there is  a function $d\in C^2(\overline\O)$ verifying Condition \ref{con}.
\end{corollary}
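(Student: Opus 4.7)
Formally Corollary~\ref{th1} is the case $j=1$ of Theorem~\ref{maintheorem}, but since the latter is stated without proof at this point, the plan is to give a direct self-contained argument that will also serve as a blueprint for the general theorem. By Remark~\ref{rm1} it suffices to produce $d\in C^2(\overline\O)$ with $|\nabla d|>0$ such that the matrix $B$ from \eqref{B} is uniformly positive definite on $\overline\O$.

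I will take the ansatz $d(x)=\phi(x_1)$ depending only on the privileged coordinate $x_1$, so that $|\nabla d|=|\phi'(x_1)|$ and the nondegeneracy condition \eqref{con2} reduces to the requirement that $\phi'$ be nowhere zero. Since $A$ is diagonal and $d$ depends on $x_1$ alone, a short direct expansion of \eqref{B} shows that $B$ has an arrowhead structure:
\begin{equation*}
b^{11}=(a^1)^2\phi''+\tfrac12 a^1(a^1)_{x_1}\phi',\qquad b^{ii}=-\tfrac12 a^1(a^i)_{x_1}\phi'\quad(i\ge 2),
\end{equation*}
with border entries $b^{1i}=b^{i1}=\tfrac12 a^i(a^1)_{x_i}\phi'$ for $i\ge 2$, and $b^{ij}=0$ for $2\le i\ne j$.

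For the choice $\phi(x_1)=e^{-\lambda x_1}$ (and $\phi(x_1)=e^{\lambda x_1}$ in the case $(a^k)_{x_1}<0$), with $\lambda>0$ a parameter to be fixed large, two features matter. First, $\phi'$ has constant sign opposite to $(a^k)_{x_1}$, so each entry $b^{ii}$ for $i\ge 2$ is positive and, by \eqref{wang1.5} together with compactness of $\overline\O$, is uniformly bounded below by a positive constant. Second, $|\phi''/\phi'|=\lambda$, so after factoring out $|\phi'|$ the $(1,1)$ entry of $B/|\phi'|$ grows linearly in $\lambda$ while all border entries $b^{1i}/|\phi'|$ remain uniformly bounded on $\overline\O$. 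A Schur-complement computation on the resulting arrowhead matrix then yields positive definiteness for all $\lambda$ sufficiently large, completing the verification of Condition~\ref{con}.

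The main conceptual obstacle is that the border entries $b^{1i}$ have no a priori controllable sign, so the strategy must make the single entry $b^{11}$ large enough to dominate the rest of the matrix. The exponential ansatz is what permits $|\phi''/\phi'|$ to be made arbitrarily large while keeping $\phi'$ of constant sign---something a polynomial $\phi$ cannot achieve, since then $\phi'$ and $\phi''$ are forced to scale together. Beyond this, the sign hypothesis on $(a^k)_{x_1}$ is used only in the mild quantitative form that, combined with compactness of $\overline\O$, upgrades pointwise positivity of the lower-right diagonal block to a uniform lower bound, which is exactly what the Schur complement step needs.
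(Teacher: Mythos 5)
Your proposal is correct, and the computations check out: with $A$ diagonal and $d=\phi(x_1)$, formula \eqref{B1} does reduce to the arrowhead matrix you describe ($b^{11}=(a^1)^2\phi''+\tfrac12 a^1a^1_{x_1}\phi'$, $b^{ii}=-\tfrac12 a^1 a^i_{x_1}\phi'$ for $i\ge2$, $b^{1i}=b^{i1}=\tfrac12 a^i a^1_{x_i}\phi'$, and $b^{ij}=0$ for $2\le i\ne j$), and the Schur complement $b^{11}-\sum_{i\ge2}(b^{1i})^2/b^{ii}$ is of order $\lambda^2|\phi|$ against an error of order $\lambda|\phi|$, uniformly on $\overline\O$ by compactness. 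However, your route differs from the paper's in two respects. First, the paper's weight is $d=e^{\lambda(c+x_1)}+\sum_{i\ge2}e^{\lambda x_i}$ (with $c$ large so the first term dominates), not a function of $x_1$ alone; as a result its matrix $B$ is not exactly arrowhead, and the off-diagonal entries $b^{ij}$, $2\le i\ne j$, only vanish in the limit $\lambda\to+\infty$. Second, instead of a Schur complement, the paper rescales the rows of $B$ (row $1$ by $d_{x_1x_1}$, the others by $d_{x_1}$), computes the limit of the rescaled matrix as $\lambda\to+\infty$, observes that the limit is diagonal except for one column so that its leading principal minors are products of diagonal entries, and concludes by continuity of minors. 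The mechanism is the same in both arguments --- $|\phi''/\phi'|=\lambda$ makes the privileged diagonal entry dominate while the sign hypothesis on $a^k_{x_1}$, $k\ge2$, fixes the sign of the remaining diagonal --- but your one-variable ansatz buys an exactly sparse matrix and a closed-form quantitative criterion, whereas the paper's rescaling-and-limit device is what lets it treat the general index $j$ and avoid expanding any determinants; the extra exponentials $e^{\lambda x_i}$ in the paper's $d$ in fact contribute nothing in the limit, so your simpler choice would serve there as well.
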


\begin{corollary}\label{th2}
Let $A=\diag(a^1, a^2)$, with $a^1, a^2\in
C^1(\overline\O)$, be positive uniformly definite over $\overline{\Omega}$.
Suppose   that $a^1_{x_2}$ (or $a^2_{x_1}$)  is either
positive or negative over $\overline \Omega$. Then there is a function  $d\in C^2(\overline\O)$ satisfying
Condition \ref{con}.
\end{corollary}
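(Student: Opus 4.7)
The plan is to deduce Corollary \ref{th2} directly from Theorem \ref{maintheorem} by specializing to the two-dimensional setting. In dimension $n=2$, the off-diagonal family $\{a^i_{x_j}\}_{1 \leq i \leq 2,\, i \neq j}$ that appears in the hypothesis of Theorem \ref{maintheorem} reduces to a single function: it is $\{a^2_{x_1}\}$ when $j=1$, and $\{a^1_{x_2}\}$ when $j=2$. Thus the constant-sign hypothesis of the main theorem collapses to the requirement that one of the two cross-derivatives $a^1_{x_2}$ or $a^2_{x_1}$ keep a definite sign throughout $\overline\Omega$, which is exactly what Corollary \ref{th2} assumes.

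Accordingly, the argument is a simple case split. If $a^2_{x_1}$ has constant sign on $\overline\Omega$, apply Theorem \ref{maintheorem} with the distinguished index $j=1$; if instead $a^1_{x_2}$ has constant sign on $\overline\Omega$, apply the theorem with $j=2$. In either case one obtains a function $d \in C^2(\overline\Omega)$ satisfying Condition \ref{con}. As the remark following the statement of Theorem \ref{maintheorem} emphasizes, no assumption on the diagonal derivative $a^j_{x_j}$ is required, so in particular no structural hypothesis on $a^1_{x_1}$ or $a^2_{x_2}$ is needed here.

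There is essentially no obstacle beyond matching the index notation of the main theorem against the concrete formulation of the corollary. The whole content of the corollary is therefore to give an explicit reading of Theorem \ref{maintheorem} in the lowest nontrivial dimension, where the hypothesis becomes particularly transparent and easy to check in examples.
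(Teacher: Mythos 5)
Your proposal is correct and coincides with the paper's intent: the corollary is stated as an immediate specialization of Theorem \ref{maintheorem} to $n=2$, where the index family $\{a^i_{x_j}\}_{i\neq j}$ reduces to the single cross-derivative $a^2_{x_1}$ (for $j=1$) or $a^1_{x_2}$ (for $j=2$), exactly as you observe. Your case split on which derivative has constant sign is precisely the reading the paper relies on, so nothing further is needed.
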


\section{Proof of Main Theorem:}
 \begin{proof}[Proof of Theorem \ref{maintheorem}: case 1]
  We first consider the following case:
  \begin{equation}\label{assump1}
    a^i_{x_j}<0,\quad \text{uniformly over } x\in\overline{\Omega},~\text{for all}~ 1\leq i\leq n~\text{with}~i\neq j.
  \end{equation}
  where $j$ is a fixed index.
  Let  $$\ds
d\triangleq d(x)=e^{\l(c+x_j)}+\sum_{1\leq i\leq n,i\neq j} e^{\l x_i},\;\; x\in \overline{\Omega},$$ where
$c>0$ satisfies that
\begin{equation}\label{newas1}
\min_{x\in\overline\O}\{c+x_j\}\geq 1+
\max_{x\in
\overline\O}\sum_{1\leq i\leq n,i\neq j}|x_i|,
\end{equation}
and
$\l>0$ is a large number will be determined later. Using \eqref{newas1}, one could check that the function $d(x)$ enjoys the following properties:
\begin{itemize}
\item For any $1\leq i\leq n$,
\begin{equation}\label{newproperties3}
  d_{x_ix_i}>0,~d_{x_i}>0,\qquad \text{uniformly for  $x\in\overline{\Omega}$ }.
  \end{equation}
   \item For any $1\leq i\leq n$,
\begin{equation}\label{newproperties1}
  \lim_{\lambda\to+\infty}\frac{d_{x_i}}{d_{x_jx_j}}=0\qquad \text{uniformly for  $x\in\overline{\Omega}$ }.
  \end{equation}
  \item For any $1\leq i\leq n$ with $i\neq j$,
\begin{equation}\label{newproperties2}
  \lim_{\lambda\to+\infty}\frac{d_{x_i}}{d_{x_j}}=0,\quad\lim_{\lambda\to+
  \infty}\frac{d_{x_ix_i}}{d_{x_j}}=0,\quad \text{uniformly for  $x\in\overline{\Omega}$ }.
\end{equation}
 \end{itemize}

From Remark \ref{rm1}, to prove $d$ enjoys
\eqref{con1} for the case
$A=\diag(a^1,\cds,a^n)$, we only need to show
the uniformly positivity of the following
matrix:
\begin{multline}\label{B1}
B=\frac 12\(
a^ia^j_{x_i}d_{x_j} + a^j a_{x_j}^i
d_{x_i} \)_{1\leq i,j\leq n} \\
+\diag\((a^1)^2d_{x_1x_1}-\frac 12 \sum_{k=1}^n a^k a^1_{x_k}
d_{x_k},\cds,(a^n)^2d_{x_nx_n}-\frac 12\sum_{k=1}^n a^k a_{x_k}^n d_{x_k}
\).
\end{multline}

To achieve this goal, we only need to show that
all the leading principal minors of $B$ are
positive. In order to avoid the terrible expansion of the determinant, we shall make full use of the asymptotic behavior with respect to the parameter $\lambda$.
We denote by $e_i$  the $i$-th standard basis of $\mathbb{R}^n$ and by $\{B_i\}_{i=1}^n$ the row vector of $B$. It can be verified that, with a very large $\lambda>0$, the matrix $B$ is uniformly positive definite over $\overline{\O}$  if and only if all the leading principal minors of
 the matrix
 $\tilde{B}(x,\lambda):=  \left(
\begin{array}{cc}
\frac{B_1}{d_{x_j}}\\
\vdots\\
\frac{ B_{j-1}}{d_{x_j}}\\
\frac{ B_j}{d_{x_jx_j}}\\
\frac{ B_{j+1}}{d_{x_j}}\\
\vdots\\
\frac{ B_n}{d_{x_j}}
\end{array}
\right)$
 is uniformly positive over $\overline{\O}$. This later condition is relatively easier to be verified because we could calculate the limit $\tilde{B}(x,+\infty)=\lim_{\lambda\to+\infty}\tilde{B}(x,\lambda)$ and the condition \eqref{assump1} guarantees that all the leading principal minors of $\tilde{B}(x,+\infty)$ are uniformly positive over $\overline{\O}$. Now we give the details of this:

 By \eqref{B1}
\begin{equation} \label{special1}
B_j=\frac 12\(
a^ja^l_{x_j}d_{x_l} + a^l a_{x_l}^j
d_{x_j} \)_{1\leq l\leq n}
+ \left((a^j)^2d_{x_jx_j}-\frac 12 \sum_{k=1}^n a^k a^j_{x_k}
d_{x_k} \right)e_j
\end{equation}
Making use of \eqref{newproperties3},\eqref{newproperties1} and \eqref{newproperties2}, we deduce
\begin{equation}\label{newasy1}
   \lim_{\lambda\to+\infty}\frac{B_j}{d_{x_jx_j}}=(a^j)^2e_j \qquad \text{uniformly for } x\in\overline{\Omega}.
\end{equation}
In the same spirit, for $1\leq i\leq n$ with $i\neq j$, we have
\begin{equation}\label{generalrow}
B_i=\frac 12\(
a^ia^l_{x_i}d_{x_l} + a^l a_{x_l}^i
d_{x_i} \)_{1\leq l\leq n}
+ \left((a^i)^2d_{x_ix_i}-\frac 12 \sum_{k=1}^n a^k a^i_{x_k}
d_{x_k} \right)e_i
\end{equation}
 One could verify by using \eqref{newproperties1} and \eqref{newproperties2} that
 \begin{equation}\label{newasy2}
  \lim_{\lambda\to+\infty}\frac{ B_i}{d_{x_j}}=\frac 12
a^ia^j_{x_i}e_j -\frac 12 a^j a^i_{x_j}
e_i   \qquad \text{uniformly for any } x\in\overline{\Omega}.
\end{equation}
By \eqref{newasy1} and \eqref{newasy2}, we deduce that

 \begin{equation}
 \lim_{\lambda\to+\infty}
    \left(
\begin{array}{cc}
\frac{B_1}{d_{x_j}}\\
\vdots\\
\frac{ B_{j-1}}{d_{x_j}}\\
\frac{ B_j}{d_{x_jx_j}}\\
\frac{ B_{j+1}}{d_{x_j}}\\
\vdots\\
\frac{ B_n}{d_{x_j}}
\end{array}
\right)=\begin{pmatrix}
 -\frac 12 a^ja^1_{x_j} &   \cdots  &   0    &   \frac 12 a^1a^j_{x_1}    & 0  & \cdots    &0\\
 \vdots & \ddots    &   \vdots    &  \vdots     & \vdots  &     &\vdots\\
 0 &  \cdots   &    -\frac 12 a^ja^{j-1}_{x_j}    &  \frac 12a^{j-1}a^j_{x_{j-1}}    & 0  &\cdots    &0\\
 \\
 0 & \cdots    &  0     &  (a^j)^2      & 0 & \cdots    &0\\
  \\
 0 &  \cdots   &  0     &  \frac 12 a^{j+1}a^j_{x_{j+1}}     &  -\frac 12 a^ja^{j+1}_{x_j} &  \cdots   & 0\\
   \vdots &     & \vdots      &   \vdots    &  \vdots &  \ddots    &\vdots\\
 0 &  \cdots   & 0      &   \frac 12 a^na^j_{x_n}    & 0  &  \cdots   &-\frac 12 a^ja^{n}_{x_j}
\end{pmatrix}
 \end{equation}
 uniformly for $x\in\overline{\Omega}$. We deduce from the above formula and \eqref{newproperties3}, \eqref{assump1} that all the leading principal minors of $\tilde{B}(x,\lambda)$ are uniformly positive with a large $\lambda$.  This complete the proof.
\end{proof}
 \begin{proof}[Proof of Theorem \ref{maintheorem}, case 2]
  Here we discuss the case when
  \begin{equation}\label{assump2}
    a^i_{x_j}>0,\quad \text{uniformly over } x\in\overline{\Omega}, 1\leq i\leq n,i\neq j,
  \end{equation}
  where $j$ is a fixed index. In this case, the proof is quite similar as above:
we define a function  $$\ds
d\triangleq d(x)=e^{-\l(x_j-c)}+\sum_{1\leq i\leq n,i\neq j} e^{-\l x_i},\;\; x\in \overline{\Omega},$$ where
$c>0$ satisfies that
\begin{equation}\label{casenewas1}
\max_{x\in\overline\O}\{x_j-c\}+ 1\leq
\min_{x\in
\overline\O}\sum_{1\leq i\leq n,i\neq j}^n|x_i|,
\end{equation}
and
$\l>0$ is a large number will be determined later. Using \eqref{casenewas1}, one could also check that the function $d(x)$ enjoys the following properties:
\begin{itemize}
\item For any $1\leq i\leq n$,
\begin{equation}\label{2newproperties3}
  d_{x_i}<0,~d_{x_{ii}}>0,\qquad \text{uniformly for  $x\in\overline{\Omega}$ }.
  \end{equation}
   \item For any $1\leq i\leq n$,
\begin{equation}\label{2newproperties1}
  \lim_{\lambda\to+\infty}\frac{d_{x_i}}{d_{x_jx_j}}=0\qquad \text{uniformly for  $x\in\overline{\Omega}$ }.
  \end{equation}
  \item For any $1\leq i\leq n$ with $i\neq j$,
\begin{equation}\label{2newproperties2}
  \lim_{\lambda\to+\infty}\frac{d_{x_i}}{d_{x_j}}=0,\quad\lim_{\lambda\to+
  \infty}\frac{d_{x_ix_i}}{d_{x_j}}=0,\quad \text{uniformly for  $x\in\overline{\Omega}$ }.
\end{equation}
 \end{itemize}

  As before, we deduce from \eqref{2newproperties3},\eqref{2newproperties1} and \eqref{2newproperties2} that  the matrix $B$ is uniformly positive definite if and only if all the leading principal minors of
 the matrix
 $\hat{B}(x,\lambda):= \left(
\begin{array}{cc}
-\frac{B_1}{d_{x_j}}\\
\vdots\\
-\frac{ B_{j-1}}{d_{x_j}}\\
\frac{ B_j}{d_{x_jx_j}}\\
-\frac{ B_{j+1}}{d_{x_j}}\\
\vdots\\
-\frac{ B_n}{d_{x_j}}
\end{array}
\right)$
 is uniformly positive over $\overline{\O}$ when $\lambda$ is large enough. By \eqref{B1}
\begin{equation} \label{special1}
B_j=\frac 12\(
a^ja^l_{x_j}d_{x_l} + a^l a_{x_l}^j
d_{x_j} \)_{1\leq l\leq n}
+ \left((a^j)^2d_{x_jx_j}-\frac 12 \sum_{k=1}^n a^k a^j_{x_k}
d_{x_k} \right)e_j
\end{equation}
Making use of \eqref{2newproperties1} and \eqref{2newproperties2}, we deduce
\begin{equation}\label{newasy1}
   \lim_{\lambda\to+\infty}\frac{B_j}{d_{x_jx_j}}=(a^j)^2e_j \qquad \text{uniformly for } x\in\overline{\Omega}.
\end{equation}
In the same spirit, for $1\leq i\leq n$ with $i\neq j$, we have
\begin{equation}\label{generalrow}
B_i=\frac 12\(
a^ia^l_{x_i}d_{x_l} + a^l a_{x_l}^i
d_{x_i} \)_{1\leq l\leq n}
+ \left((a^i)^2d_{x_ix_i}-\frac 12 \sum_{k=1}^n a^k a^i_{x_k}
d_{x_k} \right)e_i
\end{equation}
 One could verify by using \eqref{newproperties1} and \eqref{newproperties2} that
 \begin{equation}\label{newasy2}
  \lim_{\lambda\to+\infty}\frac{ B_i}{d_{x_j}}=\frac 12
a^ia^j_{x_i}e_j -\frac 12 a^j a^i_{x_j}
e_i   \qquad \text{uniformly for any } x\in\overline{\Omega}.
\end{equation}
By \eqref{newasy1} and \eqref{newasy2}, we deduce that

 \begin{equation}
 \lim_{\lambda\to+\infty}
   \left(
\begin{array}{cc}
\frac{B_1}{d_{x_j}}\\
\vdots\\
\frac{ B_{j-1}}{d_{x_j}}\\
\frac{ B_j}{d_{x_jx_j}}\\
\frac{ B_{j+1}}{d_{x_j}}\\
\vdots\\
\frac{ B_n}{d_{x_j}}
\end{array}
\right)=\begin{pmatrix}
 -\frac 12 a^ja^1_{x_j} &   \cdots  &   0    &   \frac 12 a^1a^j_{x_1}    & 0  & \cdots    &0\\
 \vdots & \ddots    &   \vdots    &  \vdots     & \vdots  &     &\vdots\\
 0 &  \cdots   &    -\frac 12 a^ja^{j-1}_{x_j}    &  \frac 12a^{j-1}a^j_{x_{j-1}}    & 0  &\cdots    &0\\
 \\
 0 & \cdots    &  0     &  (a^j)^2      & 0 & \cdots    &0\\
  \\
 0 &  \cdots   &  0     &  \frac 12 a^{j+1}a^j_{x_{j+1}}     &  -\frac 12 a^ja^{j+1}_{x_j} &  \cdots   & 0\\
   \vdots &     & \vdots      &   \vdots    &  \vdots &  \ddots    &\vdots\\
 0 &  \cdots   & 0      &   \frac 12 a^na^j_{x_n}    & 0  &  \cdots   &-\frac 12 a^ja^{n}_{x_j}
\end{pmatrix}
 \end{equation}
 uniformly for $x\in\overline{\Omega}$. The above formula implies

 \begin{equation}
 \lim_{\lambda\to+\infty}
   \left(
\begin{array}{cc}
-\frac{B_1}{d_{x_j}}\\
\vdots\\
-\frac{ B_{j-1}}{d_{x_j}}\\
\frac{ B_j}{d_{x_jx_j}}\\
-\frac{ B_{j+1}}{d_{x_j}}\\
\vdots\\
-\frac{ B_n}{d_{x_j}}
\end{array}
\right)=\begin{pmatrix}
 \frac 12 a^ja^1_{x_j} &   \cdots  &   0    &   -\frac 12 a^1a^j_{x_1}    & 0  & \cdots    &0\\
 \vdots & \ddots    &   \vdots    &  \vdots     & \vdots  &     &\vdots\\
 0 &  \cdots   &    \frac 12 a^ja^{j-1}_{x_j}    &  -\frac 12a^{j-1}a^j_{x_{j-1}}    & 0  &\cdots    &0\\
 \\
 0 & \cdots    &  0     &  (a^j)^2      & 0 & \cdots    &0\\
  \\
 0 &  \cdots   &  0     &  -\frac 12 a^{j+1}a^j_{x_{j+1}}     &  \frac 12 a^ja^{j+1}_{x_j} &  \cdots   & 0\\
   \vdots &     & \vdots      &   \vdots    &  \vdots &  \ddots    &\vdots\\
 0 &  \cdots   & 0      &   -\frac 12 a^na^j_{x_n}    & 0  &  \cdots   &\frac 12 a^ja^{n}_{x_j}
\end{pmatrix}
 \end{equation}
 uniformly for $x\in\overline{\Omega}$.  We deduce from the above formula and \eqref{2newproperties3}, \eqref{assump2} that all the leading principal minors of $\hat{B}(x,\lambda)$ are uniformly positive with a large $\lambda$.  This complete the proof.
\end{proof}

\section{Examples and Comments}

There have been a lot of conditions to ensure the existence of the function $d$.  In \cite{Yao1} (see also \cite{Yao2}),
the author provides a  sectional curvature condition to guarantee the existence of functions $d$. This condition is that
the sign of the sectional curvature function $k$ for
the Riemannian manifold,  with a metric
$A^{-1}=(a^{ij})^{-1}_{1\leq i,j\leq n}$, is either positive or negative over $\overline{\Omega}$.

In this section, we will compare the condition in Theorem~\ref{th1} with the above-mentioned  condition given in \cite{Yao1}.
Then, we will
see  some  advantage can be taken from the condition in Theorem~\ref{th1}.
First, \cite{Yao1} needs the $C^\infty$-regularity for coefficients $a^{i,j}$;
while our Theorem~\ref{th1} only needs the $C^1$-regularity for coefficients.
Second (more important), there are many cases which can be solved by our Theorem~\ref{th1},
but cannot be solved by the sectional curvature condition provided  in \cite{Yao1}. Here, we present an example to explain
the second advantage above-mentioned.

\begin{example}
Let $A=\diag(a^1,a^2)$, where $a^1,a^2\in
C^\infty(\overline\O)$. Suppose that  $
a^2_{x_1}<0$ over $\overline{\Omega}$.  By Theorem \ref{maintheorem} or Corollary \ref{th2}, there is a function $d\in C^2(\overline{\Omega})$ verifying
Condition \ref{con}. However, by making use of the sectional curvature condition provided  in \cite{Yao1}, we cannot imply the existence of the above-mentioned $d$.
In fact, after some  computation, one
can see that  the sectional curvature given by the
metric $A^{-1}$ is as follows:
\begin{equation}\label{wang3.1}
k=\frac{1}{4(a^1a^2)^2}\[ a^2 a^1_{x_1}a_{x_1}^2
 + a^1(a_{x_1}^2)^2   - 2a^1 a^2
  a^2_{x_1x_1}
\].
\end{equation}
From (\ref{wang3.1}), one can  construct many such $a^i$, $i=1,2$, with the property that  $ a^2_{x_1}<0$ over $\overline{\Omega}$, such that the corresponding $k$ changes its sign over $\overline{\Omega}$.

 Here, we provide one of them as follows:
Let $\O=\{(x_1,x_2):\,(x_1-2)^2 + x_2^2 <
3/2\}\subset \dbR^2$. Let $a^1 = e^{\mu_1 x_1}$
and $a^2 = e^{-\mu_2 x_1^2}$, where $\mu_1$ and $\mu_2$ satisfy
\begin{equation}\label{W3.2}
\mu_1>0; \; \mu_2>0;\;  \mu_1 + 2\mu_2 <
2;\; 3\mu_1+18\mu_2>2.
\end{equation}
Clearly, (\ref{W3.2}) has solutions.

 In this case, it is
clear that $ a^2_{x_1}<0$ over
$\overline{\Omega}$ because $x_1>0$ over $\overline{\O}$.
 From (\ref {wang3.1}), we see
$$
\begin{array}{ll}\ds
4(a^1a^2)^2k&\ds=-2\mu_1\mu_2x_1 e^{ \mu_1
x_1-2\mu_2 x_1^2 } + 4\mu_2^2 x_1^2 e^{ \mu_1
x_1-2\mu_2 x_1^2 } + 4\mu_2  e^{ \mu_1
x_1-2\mu_2 x_1^2 } - 8\mu_2^2 x_1^2 e^{ \mu_1
x_1-2\mu_2 x_1^2 }\\
\ns&\ds = -2\mu_2e^{ \mu_1 x_1-2\mu_2 x_1^2
}\big( \mu_1x_1 + 2 \mu_2 x_1^2- 2  \big).
\end{array}
$$
From (\ref{W3.2}), it follows that
$$
(\mu_1x_1 + 2 \mu_2 x_1^2- 2)\big|_{x_1=1}  <0
$$
and
$$
(\mu_1x_1 + 2 \mu_2 x_1^2- 2  )\big|_{x_1=3}>0.
$$
 Hence, $k>0$ in the set $\O\cap
\{(x_1,x_2):\, x_1=1\}$; while  $k<0$ in the set
$\O\cap \{(x_1,x_2):\, x_1=3\}$. From these, we conclude that $k$ changes its sign over
$\overline{\Omega}$. Therefore, the method in
\cite{Yao1} does not work for the current case.

\end{example}

The next two examples are taken from \cite{Yao1} for which the existence can be ensured by either the sectional curvature condition provided in \cite{Yao1}
or our Theorem~\ref{maintheorem}.

\begin{example}
Let  $A=(a^{ij})_{1\leq i,j\leq 2} = \diag(e^{x^3 +
y^3},e^{x^3 + y^3})$. One can directly check that
$$a^2_{x_1}=3y^2 e^{x^3 + y^3}>0.$$
Then, according to Theorem \ref{th2},  there is a $d$
satisfiing \eqref{con1} and \eqref{con2}.
\end{example}

\begin{example}
Let  $A=(a^{ij})_{1\leq i,j\leq 2} = \diag(e^{x + y
},e^{x  + y })$. One can easily check that
$a^2_{x_1}= e^{x + y}>0$. Then, by Theorem
\ref{th2},  there exists a $d$ satisfing
\eqref{con1} and \eqref{con2}.
\end{example}

\begin{remark}
The sectional curvature condition provided in \cite{Yao1} works better than our Theorem~\ref{th1} when $a^{ij}$ is not of diagonal form.
For instance, the Example 3.2 in \cite{Yao1}.
\end{remark}

\section*{Acknowledgements}
The author want to thank Professor Gengsheng Wang for pointing out the prototype of this result and his help on the examples.


\end{document}